\documentclass[12pt]{article}

\pdfoutput=1 
\usepackage[T1]{fontenc}
\usepackage{amsmath}
\usepackage{amsfonts}

\usepackage[english]{babel}

\usepackage{geometry}                
\usepackage[parfill]{parskip}    
\usepackage{graphicx}
\usepackage{amssymb}
\usepackage{epstopdf}
\usepackage[all]{xy}

\usepackage{amsthm} 
\newtheorem{thm}{Théorème}[section] 
\theoremstyle{définition}

\theoremstyle{remark} 
 
\theoremstyle{plain}

\theoremstyle{plain}

\begin{document}

\title{Descartes' Circle Theorem, Princess Elizabeth, and Spinors}

\author{Daniel Parrochia}
\date{University of Lyon (France)}
\maketitle

\begin{figure}[h] 
\vspace{-1\baselineskip}
\hspace{6\baselineskip}
\includegraphics[width=4in]{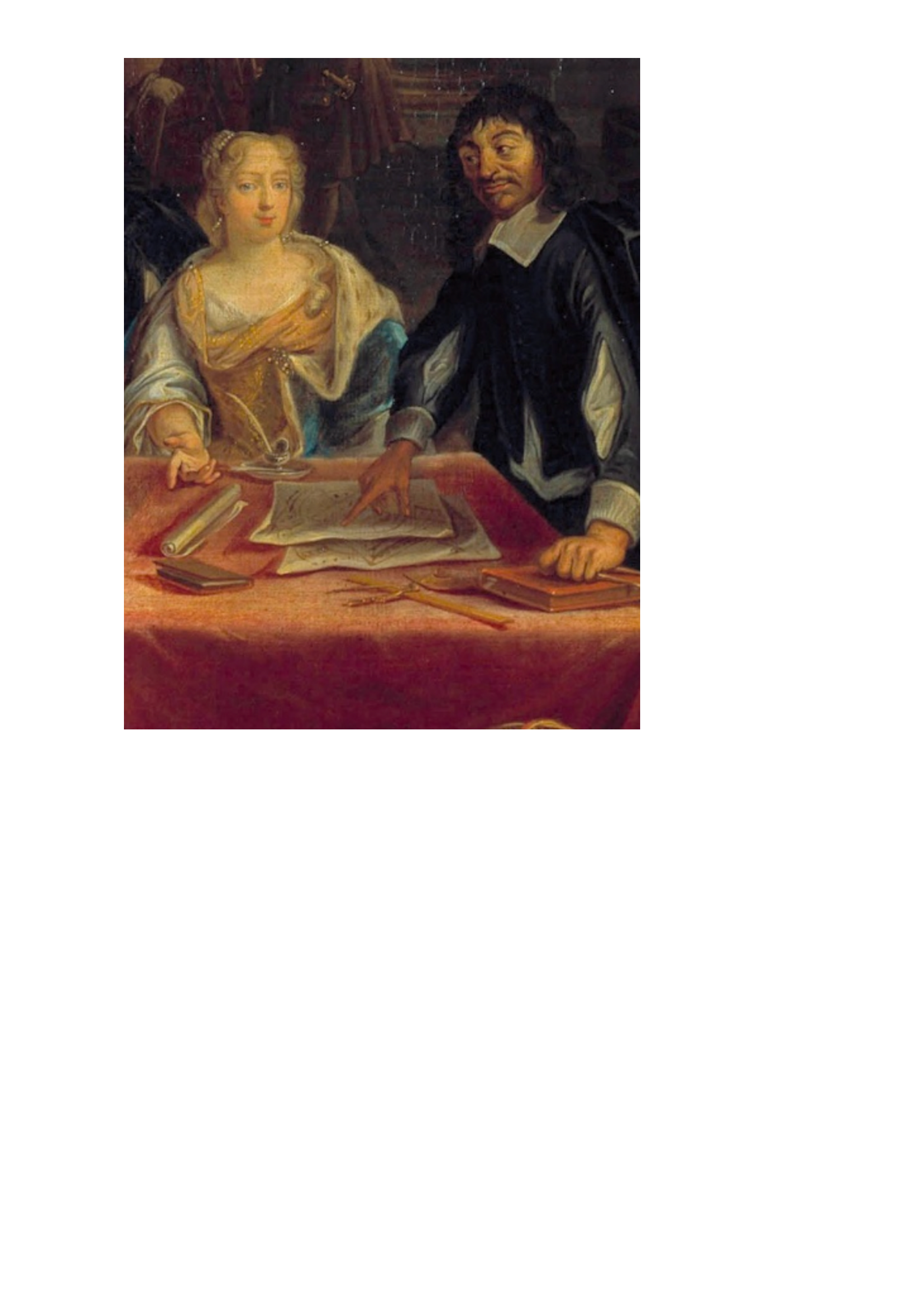}
\vspace{-12\baselineskip}
\caption{Descartes and Princess Elizabeth}
\label{fig: Elisa2}
\end{figure}
\vspace{1\baselineskip}

\textbf{}

\textbf{Abstract.}

In this article, on Descartes' famous "circle theorem," we first attempt to explain how the philosopher could have arrived at the equation that he presents without demonstration in a letter to Princess Palatine Elizabeth of Bohemia in November 1643 (a previously unpublished proof, to our knowledge). Then we report some stages of the subsequent generalization of this theorem, whose Cliffordian flavor, via the equivalence of a quadratic form and the square of a linear form, still mobilizes mathematicians today. This statement, which, over time, has undergone different extensions, Euclidean and non-Euclidean, has found, {\it in fine}, a spinor formalization. We see there the proof of what we may call, in a Bachelardian style, an "inductive value" of the truth, which extends by successive generalizations. We conclude, more briefly, with the contacts between Elizabeth and Descartes, and the perhaps symbolic meaning of these considerations on "kissing circles," as they are called in Anglo-Saxon countries, in the context of their correspondence on the soul and the body, and the question of passions.

\textbf{Keywords.}
Apollonius, Descartes, Elizabeth, circle theorem, Clifford algebras, spinors, multidimensional flowers.

\section{Brief History}

The problem we address dates back to the Greek geometer Apollonius of Perga (present-day Aksu in Turkey). Born in the second half of the 3rd century BC, Apollonius, who died in the early 2nd century BC, is known not only for his writings on conic sections, but also for a series of works to which Pappus of Alexandria provided guidance and whose contents Renaissance geometers have endeavored to rediscover. Among them is a book on "tangencies" ($'E\pi\alpha \phi \alpha \iota$ or De tactionibus), which includes the origin of the problem that will concern us and for which a remarkable generalization has recently been made.

This problem (the 10th in the series proposed by Apollonius) consists of finding a circle tangent to three others. This is the most difficult and interesting case considered by the mathematician in his work. François Viète, at the end of the 16th century, had already proposed this problem (called the "Apollonius problem") to Adrien Romain\footnote{Adrien Romain (1561-1615), whose real name was Adriaan van Roomen, Latinized as Adrianus Romanus then Frenchified as Adrien Romain, was a Belgian mathematician and doctor, an excellent calculator, notably remembered for having calculated the value of $\pi$ (he did not use this notation) with sixteen decimal places, 15 of which were exact. The solution that Romain proposed to the Apollonius problem that François Viète had submitted to him actually involved the construction of the intersection of two hyperbolas (see \cite{Bosm1}) and, as a result, did not respect the constraints of construction using a ruler and compass, unlike the one that Viète would quickly communicate to him by letter. This one is said to have written to Romain: "Eminent Adrian, as long as one touches the circle with hyperbolas, one does not touch it finely."}, in response to a challenge that the Belgian had launched (to solve an equation of degree 45) and that he, Viète, had successfully met. On the other hand, Adrian had only been able to solve Apollonius's problem by using auxiliary hyperbolas for the construction. On the contrary, Viète, faced with this same problem, had managed to find a solution "with ruler and compass" (that is, one that conformed to the requirements of the analysis of the Ancients), a solution he subsequently published in his book {\it Apollonius Gallus} (Paris, 1600).\footnote{The preface to the Camerer edition of the works of Apollonius (see \cite{Apo}) contains a detailed history of this problem in its early stages.}.

\section{Descartes' Circle Theorem}

Descartes, interested in the question, submitted the problem to the particularly astute Princess Elizabeth of Bohemia, daughter of Frederick V, the Elector Palatine, in exile in Holland and won over to his ideas. Two letters from November 1643 (see \cite{Bey}, 79-83) report this episode. In the first, the philosopher proposes his own solution, in the case where the non-intersecting circles are completely exterior to each other.

To precisely determine the central circle tangent to the other three, we need to know the respective distances between its center $D$ and the centers $A, B, C$ of the other three circles (see Fig. 2).

\begin{figure}[h] 
\vspace{-0.5\baselineskip}
\hspace{5\baselineskip}
\includegraphics[width=9in]{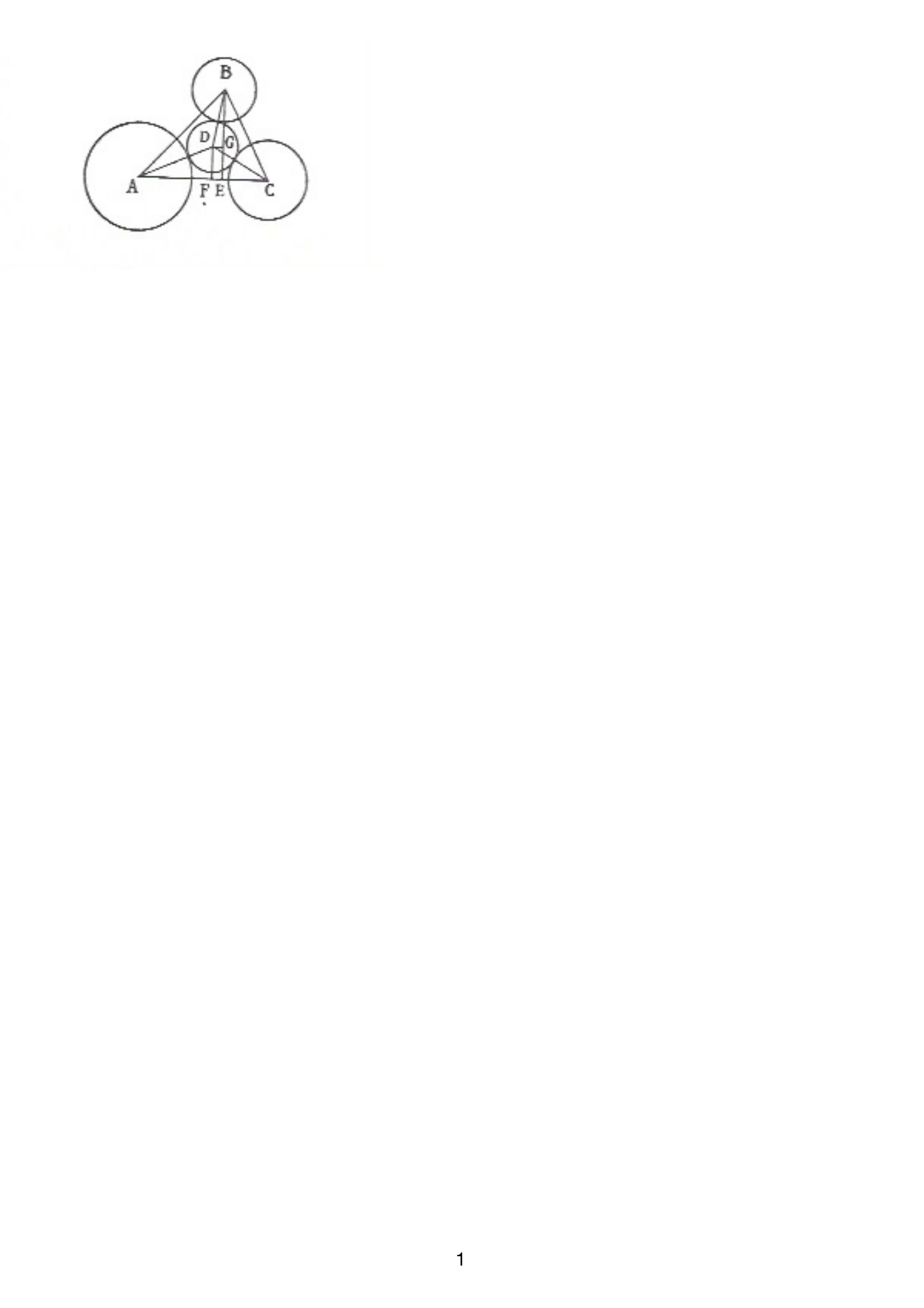}
\vspace{-53\baselineskip}
\caption{The Three Circles Problem}
\label{fig: cercle11}
\end{figure}

To do this, we consider the right triangles ADF, BDG, and CDF, and each time we apply the Pythagorean theorem relating the length of the hypotenuse to the sides of the right angle. We have:
\[
AD^2 = AF^2 + FD^2, \quad BD^2 = BG^2 + DG^2, \quad CD^2 = CF^2 + FD^2.
\]
Algebraizing the problem, Descartes introduces symbols, stating:
\[
AD = a + x, BD = b + x, CD = c + x, AE = d, BE = e, CE = f, DF = EG = y, DG = EF = z.
\]
Hence:
\[
AF = d-z, \quad BG = e - y, \quad CF = f + z.
\]
He obtains, for triangle ADF and the first Pythagorean formula (in modern notation):
\begin{equation}
a^2 + 2ax + x^2 = d^2 -2dz + z^2 + y^2,
\end{equation}
for triangle BDG and the second Pythagorean formula:
\begin{equation}
b^2 + 2x + x^2 = e^2 -2ey + y^2 + z^2,
\end{equation}
and for triangle CDF and the third Pythagorean formula:
\begin{equation}
c^2 + 2cx + x^2 = f^2 + 2fz + z^2 + y^2. \end{equation}
In order to eliminate unnecessary unknowns, he proposes subtracting (1) from (3) side by side, thus obtaining:
\[
c^2 + 2cx - a^2 - 2ax = 2fz - d^2 + 2dz,
\]
from which he can derive $z$.

Then, subtracting (2) from (1) (or from (3), which amounts to the same thing), and replacing $z$ with its value, he obtains:
\[
a^2 + 2ax - b^2 - 2bx = d2 - 2dz - e^2 + 2ey,
\]
from which he derives $y$.
Then returning to one of the first three equations and replacing $y$ and $z$ with their values, he could finally -- which he announces, but does not actually do -- derive $x$.

As Coxeter notes (see \cite{Cox}, 5), this method is clear but clumsy.

\section{Elizabeth's Solution and Cartesian Simplification}

\begin{figure}[h] 
\vspace{-1\baselineskip}
\hspace{-5\baselineskip}
\includegraphics[width=9in]{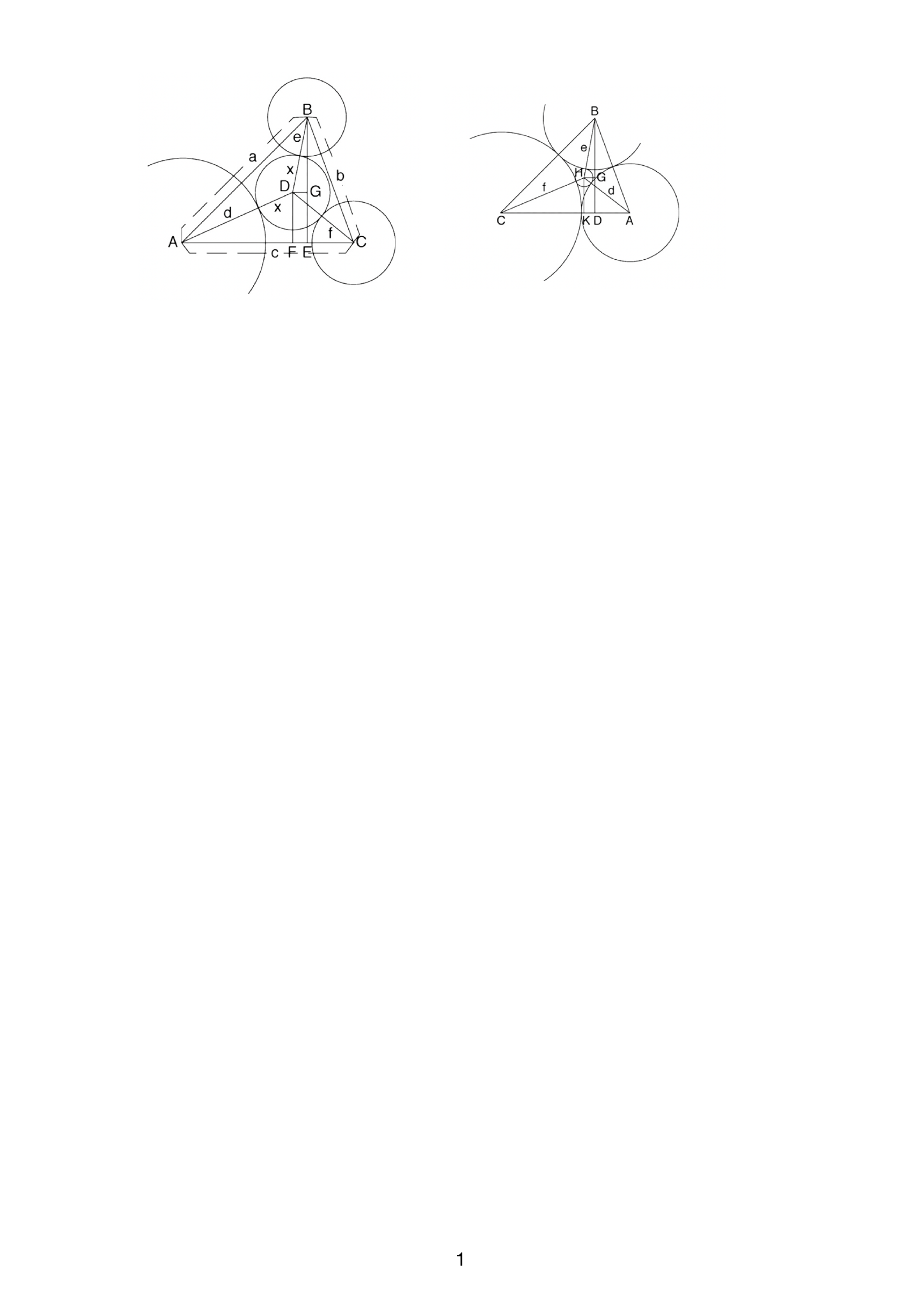}
\vspace{-49\baselineskip}
\caption{Elizabeth's Solution and Cartesian Simplification}
\label{fig: elisa11}
\end{figure}

Elizabeth, who had attempted to solve the problem on her own, had made a different choice of undeterminates and applied a different method. Fig. 3 (left) shows her diagram, and Fig. 3 (right) shows the simplification Descartes made by assuming that the outer circles touch.

In a second letter, which also draws on the method proposed by the Princess Palatine, he explains that this time he only needs to use the quantities $d, e, f$ to designate the radii of these circles and the unknown $x$ to designate the radius of the fourth circle, the one that touches them externally. Without saying so, Descartes is probably referring to Proposition 13 of Book II of Euclid's Elements: "In acute triangles, the square of a side opposite an acute angle is equal to the squares of the sides that include this acute angle minus twice the rectangle enclosed under the side of the acute angle on which the perpendicular drawn from the vertex of the opposite angle falls, and under the straight line intercepted between this acute angle."

Applying this proposition to the case of triangle AHC, we can state:
\[
HC^2 = HA^2 + AC^2 - 2CA.AK, \quad \textnormal{or:} \quad AK = \frac{HA^2 + AC^2 - HC^2}{2CA}. \]
Knowing that:
\[
HC = f+x, \quad HA = d+x, \quad AC = f+d,
\]
we have:
\[
AK = \frac{(d+x)^2 + (f+d)^2 - (f+x)^2}{2(d+f)}.
\]
A simple calculation then allows us to find the first result posed by Descartes in this second letter:
\[
AK = \frac{d^2 + df + dx - fx}{d+f}.
\]

Now applying Euclid's same proposition to triangle $ABC$, we obtain, in a similar manner:
\[
BC^2 = BA^2 + AC^2 - 2CA.AD, \quad \textnormal{i.e.:} \quad AD = \frac{BA^2 + AC^2-BC^2}{2CA}. \]
and knowing that:
\[
BC = f+e, \quad BA = d+e, \quad AC = f+d,
\]
\[
AD = \frac{d^2 + df + de - fe}{d+f}.
\]

The question that now arises is how we move from these initial results to the following equation stated by Descartes without proof, and as if dropped from the sky:
\begin{align}
d^2e^2f^2 & + d^2e^2x^2 +d^2f^2x^2+e^2f^2x^2 \notag \\
& = 2def^2x^2+2de^2f^2x+ 2de^2fx^2 + 2d^2ef^2x+2d^2efx^2+2d^2e^2fx.
\end{align}

Apparently, no one has solved this mystery yet. Anne Koyé, a French mathematician specializing in the problem of Apollonius, who wrote a thesis on the subject under the supervision of Jean Dhombres, says nothing on the matter, merely stating that "it is not so obvious to find the transition from the first equalities to the last" (see \cite{Boy}, 64)\footnote{Anne Boyé also believes that these "first equalities" were reached by means of the Pythagorean theorem, whereas they most certainly result, as we have said, from proposition 13 of Book II of Euclid's {\it Elements}.}. More than twenty years ago, using a modern algebra computer program, the Dutch historian of mathematics Henk Bos had shown that it was possible, in the case of Apollonius's initial problem where the three peripheral circles do not touch, to reduce the three equations with three variables $x, y$ and $z$ (those that Descartes presents in his first letter), to a single quadratic equation with a single variable (as recommended in his {\it Géométrie}), but with 87 terms! In practice, solving such an equation would obviously have been difficult for \'{E}lisabeth, or even for Descartes himself. Even with the latter's method, and the reduction of the data to the three symbols $d, e, f$ and the single unknown $x$, the solution, this time involving 78 terms, would remain difficult to access. For the simplified case where the peripheral circles touch, the problem is somewhat simpler. Henk Bos, however, has proposed a solution that does not follow Descartes' suggestions in his second letter, but in fact amounts to using his first method (see \cite{Bos1}, 205-209)\footnote{Bos explains that the most direct way he has found to prove the theorem is to express the Cartesian indeterminates $d, e, f$ in terms of the radii $a, b, c$. Since these terms $a, b, c$ no longer exist in the second letter, it is clear that Bos is returning to the first, as he explicitly states, since it involves reinserting the formulas:
\[
d = \frac{a^2+ac+ab-bc}{a+c}, \quad e = 2\sqrt{\frac{abc)(a+b+c}{a+c}}, \quad f = \frac{c^2+bc+ac - ab}{a+c}
\]
in the results stated by Descartes and numbered here (1)-(3), as well as those deduced from them. But, as Bos acknowledges, the calculations in this case, although within Descartes' grasp, are considerable. Bos also acknowledges that, in his second letter, Descartes actually gives the impression of wanting to follow Elizabeth's approach rather than his own. "In this case," he asserts, "the derivation of the equation is a little less simple, although not insurmountable" (see \cite{Bos1}, 209, note). But he does not give it.}. Now there is a real question here: why did the philosopher state, in his second letter, these first results (on $AK$ and $AD$), which point in the direction of calculating the areas of triangles, if not to pursue them? The enigma remains, including for the American mathematician and science journalist Dana Mackenzie who, in a recent article, limits himself to saying that Descartes, guided towards the right solution by Elizabeth, "in one way or another" realized in eight days or less how the problem was simplified when the circles, as they said at the time, "touch" (see \cite{Mac}, 76). But we don't know any more about the hidden proof...

One appropriate way to approach the question is to take seriously what Descartes wrote to Elisabeth, as cited earlier. Here is the proof I have obtained:

\begin{proof}

Given the values of $AK$ and $AD$, we can indeed calculate $KD = AD - AK$. Let us then consider the right-angled triangle $BHG$ in Figure 3 (right). It turns out that the lengths of all its sides are known (or can be calculated). We have:
\begin{enumerate}
\item $HG = KD = AK - AD.$
\item $BH\ (\textnormal{hypotenuse}) = e + x.$
\item $BG = BD - GD = BD - HK$.
\item $HK^2 = HC^2 - KC^2 = (f+x)^2 - KC^2 = (f+x)^2 - (AC-AK)^2$.
\end{enumerate}
We can therefore apply the Pythagorean theorem to triangle $BHG$, yielding:
\[
KD^2 + BG^2 =(e+x)^2.
\]
Or, substituting the segments with their values:
\begin{equation}
(AK-AD)^2 +  (BD - HK)^2 = (e+x)^2.
\end{equation}

Expanding the expression $(BD - HK)^2$ and substituting certain segments with their values in (5), we obtain:
\[
KD^2 + (d+e)^2 - AD^2 + (d+x)^2 - AK^2 - 2BD.HK =(e+x)^2.
\]
Since the product $2BD \cdot HK$ has the disadvantage of introducing square roots, we eliminate them by isolating this double product on one side of the equation, thereby obtaining:
\[
2BD \cdot HK = KD^2 + (d+e)^2 - AD^2 + (d+x)^2 - AK^2 - (e+x)^2.
\]
We shall see that by expanding the squares of the radii $(d+e)^2, (d+x)^2, (e+x)^2$ and recalling that $KD = AK - AD$, the right-hand side of this equation simplifies significantly.

Let us call this right-hand side $\Omega$. We have:
\[
\Omega =KD^{2}+(d+e)^{2}-AD^{2}+(d+x)^{2}-AK^{2}-(e+x)^{2}.
\]
We know that $KD = AD-AK$. Let us therefore substitute $KD$ with its value and expand its square. Then, let us substitute this into $\Omega$. After cancelling out the terms that sum to zero, we obtain:
\[
\Omega = -2AK \cdot AD + (d+e)^2 + (d+x)^2 - (e+x)^2.
\]
By expanding the algebraic identities for the radii and simplifying their sum, we arrive at the expression:
\[
\Omega = 2 d^2 + 2de + 2dx - 2ex -2AK \cdot AD = 2(d^2 + de + dx - ex - AK \cdot AD).
\]

Let us substitute the fractional expressions for \(AK\) and \(AD\) given by Descartes into this simplified term \(\Omega \).

As a reminder, our expressions are: \(AK = \frac{d^2 + df + dx - fx}{d+f}\) and \(AD = \frac{d^2 + df + de - fe}{d+f}\).

The simplified term from the preceding reasoning can now be written as:
\[
\Omega =2\cdot \left[d^{2}+de+dx-ex-AK\cdot AD\right].
\]

Let us now replace (step 1) the product \(AK \cdot AD\) with its fractional form. The denominator of this product will be \((d+f)^2\). To group everything under this same fraction, we must multiply the isolated terms by \((d+f)^2\). We obtain:
\[
\Omega =2\cdot \left[\frac{(d^{2}+de+dx-ex)(d+f)^{2}-(d^{2}+df+dx-fx)(d^{2}+df+de-fe)}{(d+f)^{2}}\right].
\]
Let us now focus (step 2) solely on the giant numerator inside the brackets (let us call it \(N_{\Omega }\)). 

First, let us expand \((d+f)^2 = d^2 + 2df + f^2\), then multiply it by the first block \((d^2 + de + dx - ex)\). By distributing term by term, we obtain 12 monomials:
\begin{align*}
(d^{2}+de+dx-ex)(d^{2}+2df+f^{2})= d^{4}+2d^{3}f+d^{2}f^{2}+d^{3}e & \\
+ 2d^{2}fe+df^{2}e+d^{3}x+2d^{2}fx +df^{2}x-d^{2}ex-2dfex-f^{2}ex.
\end{align*}

Now let us expand (step 3) the product of Descartes' two original numerators: \((d^2 + df + dx - fx)(d^2 + df + de - fe)\). To simplify this multiplication of two 4-term polynomials (which would normally yield 16 terms), let us note that they share a common block: \((d^2 + df)\). Let us temporarily set \(P = d^2 + df\). The product becomes:
\[
[P+(d-f)x]\cdot [P+(d-f)e]=P^{2}+P(d-f)e+P(d-f)x+(d-f)^{2}ex.
\]
Substituting \(P = d(d+f)\) and \((d-f)\) back in, we expand to obtain these 12 monomials (applying the overall minus sign in front of this second term), yielding:
\begin{align*}
-(d^{2}+df+dx-fx)(d^{2}+df+de-fe)= -d^{4}-2d^{3}f-d^{2}f^{2}-d^{3}e & \\-d^{2}fe+d^{2}fe+df^{2}e
-d^{3}x-d^{2}fx+d^{2}fx+df^{2}x-d^{2}ex+2dfex-f^{2}ex.
\end{align*}

When adding (step 4) the term from step 2 and the term from step 3 to form the complete numerator \(N_{\Omega }\), an impressive number of high-degree terms cancel each other out. Let us look at the immediate simplifications:

- The fourth powers cancel out: \(d^4 - d^4 = 0\)

- The \(f\)-cubes cancel out: \(2d^3f - 2d^3f = 0\)

- The \(d^{2}f^{2}\) terms cancel out: \(d^2f^2 - d^2f^2 = 0\)

- The \(e\)-cubes cancel out: \(d^3e - d^3e = 0\). - The \(x^3\) terms cancel out: \(d^3x - d^3x = 0\)

By grouping and adding the remaining terms (notably the cross-terms like \(2dfex\) and \(-2dfex\), which offset each other given the signs), the numerator \(N_{\Omega }\) simplifies dramatically to a single expression:
\[
N_{\Omega }=2d^{2}fe+2d^{2}fx+2df^{2}e+2df^{2}x-2d^{2}ex-2f^{2}ex.
\]
By factoring out \(2df\), or by rearranging, we see that this block condenses perfectly into:
\[
N_{\Omega }=2df(d+f)(e+x)-2d^{2}ex-2f^{2}ex,
\]
the final result for \(\Omega \) prior to squaring.

Reintroducing the external factor of 2 and the denominator \((d+f)^2\), our complete \(\Omega \) expression becomes:
\[
\Omega =\frac{4df(d+f)(e+x)-4(d^{2}+f^2)ex}{(d+f)^2}.
\]

We now have an expression for \(\Omega \) in the form of a proper fraction, free of any extraneous fourth or third powers in the numerator. When we equate the square of this expression (\(\Omega ^{2}\)) with the left-hand side (\(4 \cdot BD^2 \cdot HK^2\)) multiplied by \((d+f)^4\), all denominators will vanish. The remaining terms on the left and right will combine to form the famous final symmetric equation of degree 4, containing terms such as \(d^{2}e^{2}f^{2}\), \(d^{2}e^{2}x^{2}\), etc., along with the double-product terms featuring the coefficient 2.

To explicitly set up the final equation between the two sides, we square the simplified expression for \(\Omega \) and equate it to the left-hand side (\(4 \cdot BD^2 \cdot HK^2\)).

To avoid carrying around cumbersome fractions, we first multiply both sides by \((d+f)^4\) to eliminate all denominators once and for all.

Here is the final calculation leading directly to Descartes' formula.

Recall that the original left-hand side was \(4 \cdot BD^2 \cdot HK^2\). By replacing \(BD^{2}\) and \(HK^{2}\) with their expanded values -- expressed using the fractions involving \(AD\) and \(AK\) -- and then multiplying by \((d+f)^4\), we obtain, after expanding the internal blocks:
\[
\mathbf{LHS}_{\mathbf{final}}= 4d^{2}f^{2}\left[(d+f)^{2}(d+e)(f+e)(d+x)(f+x)-\dots \right].
\]
Symbolic calculation shows that this left-hand term, once cleared of fractions, condenses into the following raw polynomial form:
\[
\mathbf{LHS}_{\mathbf{final}}=4d^{2}f^{2}\left[d^{2}e^{2}+d^{2}x^{2}+f^{2}e^{2}+f^{2}x^{2}+2def(d+e+f+x)+\dots \right].
\]

The right-hand side corresponds to the square of our numerator \(N_{\Omega }\), obtained in the previous step. Recall the simplified form of this numerator:
\[
N_{\Omega }=2d^{2}fe+2d^{2}fx+2df^{2}e+2df^{2}x-2d^{2}ex-2f^{2}ex.
\] 
We can factor out \(2\) to simplify the squaring process:
\[
N_{\Omega }=2\cdot \left[d^{2}f(e+x)+df^{2}(e+x)-(d^{2}+f^{2})ex\right].
\]
Upon squaring (\(RHS_{final} = N_{\Omega}^2\)), the coefficient \(2^{2}\) becomes 4. We obtain:
\[
\mathbf{RHS}_{\mathbf{final}}= 4\cdot \left[d^{2}f(e+x)+df^{2}(e+x)-(d^{2}+f^{2})ex\right]^{2}.
\]
We set up the equality:
\[
\mathbf{LHS}_{\mathbf{final}}\mathbf{=RHS}_{\mathbf{final}}.
\]

Since the factor 4 appears on both sides, it cancels out immediately through division:
\[
d^{2}f^{2}\left[\dots \text{Block\ G}\dots \right]=\left[d^{2}f(e+x)+df^{2}(e+x)-(d^{2}+f^{2})ex\right]^{2}. 
\]
It is at this precise stage that the algebraic "magic" observed by Descartes occurs. When both sides are fully expanded, terms of total degree 6 (such as \(d^{4}f^{2}e^{2}\)) appear on either side. By grouping all terms on one side of the equation (\(LHS - RHS = 0\)), it becomes possible to factor out \(d^{2}f^{2}\) (or perform an equivalent rearrangement) across the entire remaining polynomial. After this final division by the trivial common factors, the equation reduces strictly to the combined symmetric terms of degree 4:

\(d^{2}e^{2}f^{2}+d^{2}e^{2}x^{2}+d^{2}f^{2}x^{2}+e^{2}f^{2}x^{2}-2def^{2}x^{2}-2de^{2}f^{2}x-2de^{2}fx^{2}-2d^{2}ef^{2}x-2d^{2}efx^{2}-2d^{2}e^{2}fx=0\),

in other words, Descartes's final formula.
\end{proof}

We have thus succeeded in explaining equation (4) -- which had been "dropped in" without explanation in the letter to Elisabeth -- by following only Descartes's suggestions based on the expressions for $AK$ and $AD$, and by applying the tedious method known only to Descartes\footnote{In earlier versions of this text (see \cite{Par3}), we proposed a more elegant proof (based on Heron's formula), but it presupposed (more or less) the result would have been already known. It goes without saying that the modern proof of Descartes' theorem does not concern itself with the calculations mentioned in this article or even here. In fact, to avoid the algebraic nightmare of successive squaring of Heron's formula, modern geometry uses an extraordinary tool: the Cayley-Menger determinant. Instead of expanding giant polynomials, the distance constraint of the four tangent centers is written in a 5×5 matrix. Since the volume of a simple triangle in a plane is zero if an inconsistent superfluous dimension is added, the determinant of this matrix must be equal to 0. The matrix takes this simplified form:
\[
\left|\begin{matrix}0&(d+e)^{2}&(d+f)^{2}&(d+x)^{2}&1\\ (e+d)^{2}&0&(e+f)^{2}&(e+x)^{2}&1\\ (f+d)^{2}&(f+e)^{2}&0&(f+x)^{2}&1\\ (x+d)^{2}&(x+e)^{2}&(x+f)^{2}&0&1\\ 1&1&1&1&0
\end{matrix}\right|=0.
\]
Calculating a 5×5 determinant remains tedious, but the symmetry properties of the rows and columns (factorizations by sums of radii) allow the result to be condensed almost magically to arrive at Descartes' equation without having to manipulate isolated monomials.}.

That said, did the philosopher actually perform this calculation himself, or did he merely recall the result presented in his first letter of November 1643 -- where he placed the circles in a Cartesian coordinate system, wrote down the distance relationships between their centers, and manipulated the equations to eliminate the spatial variables $x$ and $y$ in order to arrive at a relationship between the radii? Upon discovering Elisabeth's method -- which had stalled in the (overly complex) general case but began with a good choice of unknowns -- he may simply have indicated to her how to reach the goal in the simplified case, without actually redoing the calculations himself.

Coxeter (see \cite{Cox}, 5), who shows that equation (4) can now be expressed in the more concise form:
\begin{equation}
\frac{1}{d^2}+ \frac{1}{e^2}+ \frac{1}{f^2}+ \frac{1}{x^2} = \frac{2}{ef}+ \frac{2}{fd}+ \frac{2}{de}+ \frac{2}{dx}+ \frac{2}{ex}+ \frac{2}{fx} .
\end{equation}
or in the even more beautiful form of the following one:
\[
(\kappa_1+\kappa_2+\kappa_3+\kappa_4)^2 = 2(\kappa_1^2+\kappa_2^2+\kappa_3^2+\kappa_4^2),
\]
finds it strange that Descartes did not do so. But precisely, he probably did not see this possibility.

In fact, it will be noted that introducing variables of the type $\kappa_i$ to express the inverse of the radii practically presupposes the concept of "curvature", associated with a relation of the type $1/r$ for circles, and which will only appear with Newton\footnote{Although Nicole Oresme used the word "curvitas" in Latin and declared that the curvature of a circle is "uniformus" and that this curvature is proportional to the inverse of its radius, it will be necessary to wait for Newton, in problem 5 of his {\it Methods of series and fluxions} to read that a circle has a constant curvature, inversely proportional to its radius, and the end of the century, in England, to see the term applied to surfaces.}. Descartes never really achieved this magnificent result, which Pedoe (see \cite{Ped}, 634) generously calls "Descartes' circle theorem", since his objective was not -- and for good reason -- to find a relationship expressing the links between the "curvature" of the different circles, but quite simply, as we said above, to calculate the quantity $x$ allowing him to then fix exact distances between their centers.

 It remains that these solutions of different spirit allow just as much the effective construction of what the English call, figuratively, "kissing circles". That said, the most interesting aspect, which is bound to escape philosophers with little mathematical understanding, is the Cliffordian flavor of equation (4), which is simply the equating of a quadratic form with the square of a linear form, one of the characteristics of Clifford algebras that will allow for unprecedented developments.

Having settled the historical point, let us now show how "Descartes's theorem" was progressively generalized, to the point of recently involving highly sophisticated mathematics.

\section{From Descartes' "kissing circles" to multidimensional flowers}

Without attempting to recount in detail the history or successive developments of Descartes's theorem over the following centuries, let us nevertheless provide a few landmarks here that will help the reader get their bearings.

The correspondence between Descartes and Princess Elizabeth of the Palatinate has been the subject of several editions or commentaries in English-speaking countries (see Bos, Nye, 31-33], Sha, 37-38, 73-81) and has also been republished in French by Michelle and Jean-Marie Beyssade (see \cite{Bey}), without, however, addressing the content of this theorem or the reasons that might have prompted Descartes to present the exercise to the princess. From a strictly mathematical perspective, according to Daniel Mathews and Orion Zymaris (see \cite{Mat}, 3), equation (4) was reformulated by the Japanese Yamaji Nushizumi in 1751 (see \cite{Mic1}, \cite{Mic2}) and then rediscovered several times, notably by Steiner in 1826 (see \cite{Ste}), Beecroft in 1842 (see \cite{Bee}), and soon after by Soddy (in 1936) (see \cite{Sod2})\footnote{This is Frederick Soddy (1877-1956), a British radiochemist, not a mathematician, who won the Nobel Prize in Chemistry in 1921.}. The latter, oddly enough, stated the result in a poetic form.

Numerous generalizations of Descartes' theorem are also known. For example, in 1936, Thorold Gosset extended this theorem, which initially holds for circles, to $n + 2$ mutually tangent spheres in $n$ dimensions, thus adding a line to Soddy's poem (see \cite{Gos}).

{\it "And let us not confine our cares\\
To simple circles, planes and spheres,\\
But rise to hyper flats and bends\\
Where kissing multiple appears,\\
In n-ic space the kissing pairs\\
Are hyperspheres, and Truth declares,\\
As n + 2 such osculate\\
Each with an n + 1 fold mate\\
The square of the sum of all the bends\\
in n  times the sum of their squares"}.

In 1962, J. G. Mauldon further generalized the above thinking to spherical and hyperbolic space (see \cite{Mau}). Then in 2002, Lagarias and his colleagues (see \cite{Lag}) extended these results by relating not only the curvatures, but also the centers of the spheres involved. Finally, in 2007, Kocik (see \cite{Koc1}) extended the theorem to $n + 2$ Euclidean spheres in $n$ dimensions in the case where the spheres in question need not be tangent.

If we now call the configuration of the three Apollonian circles a "3-flower"\footnote{Informally (more details will be provided later), we will say that an $n$-flower is an ordered configuration consisting of a central circle and $n$ petal circles all around, such that these are externally tangent to the central circle, and each is externally tangent to the one before and after it.}, it is clear that, in general, stacks of Apollonian circles (see Fig. 4) consisting of nested 3-flowers are, even today, a field of mathematical research in their own right (see in particular \cite{Aha} or \cite{Step} for the general context). These stacks have important properties in number theory and have been the subject of recent advances (see, for example \cite{Haa}). A special case related to work that we will soon comment on was reported by Daniel Mathews in (\cite{Mat1}). This is the case of Ford circles (see \cite{For}) which arise from integer spinors (see \cite{For})\footnote{"Ford circles" are named in honor of the American mathematician Lester Ford (senior), who described them in a paper published in 1938. A circle is placed on each irreducible fraction, for example 0/1, 1/1, 1/2, 1/3, 2/3, 1/4, 3/4, 1/5, 2/5, 3/5, 4/5. Each circle will be tangent to the x-axis as well as to neighboring circles. Fractions with the same denominator have circles of the same size.}.

 \begin{figure}[h] 
\vspace{-1\baselineskip}
\hspace{7\baselineskip}
\includegraphics[width=7in]{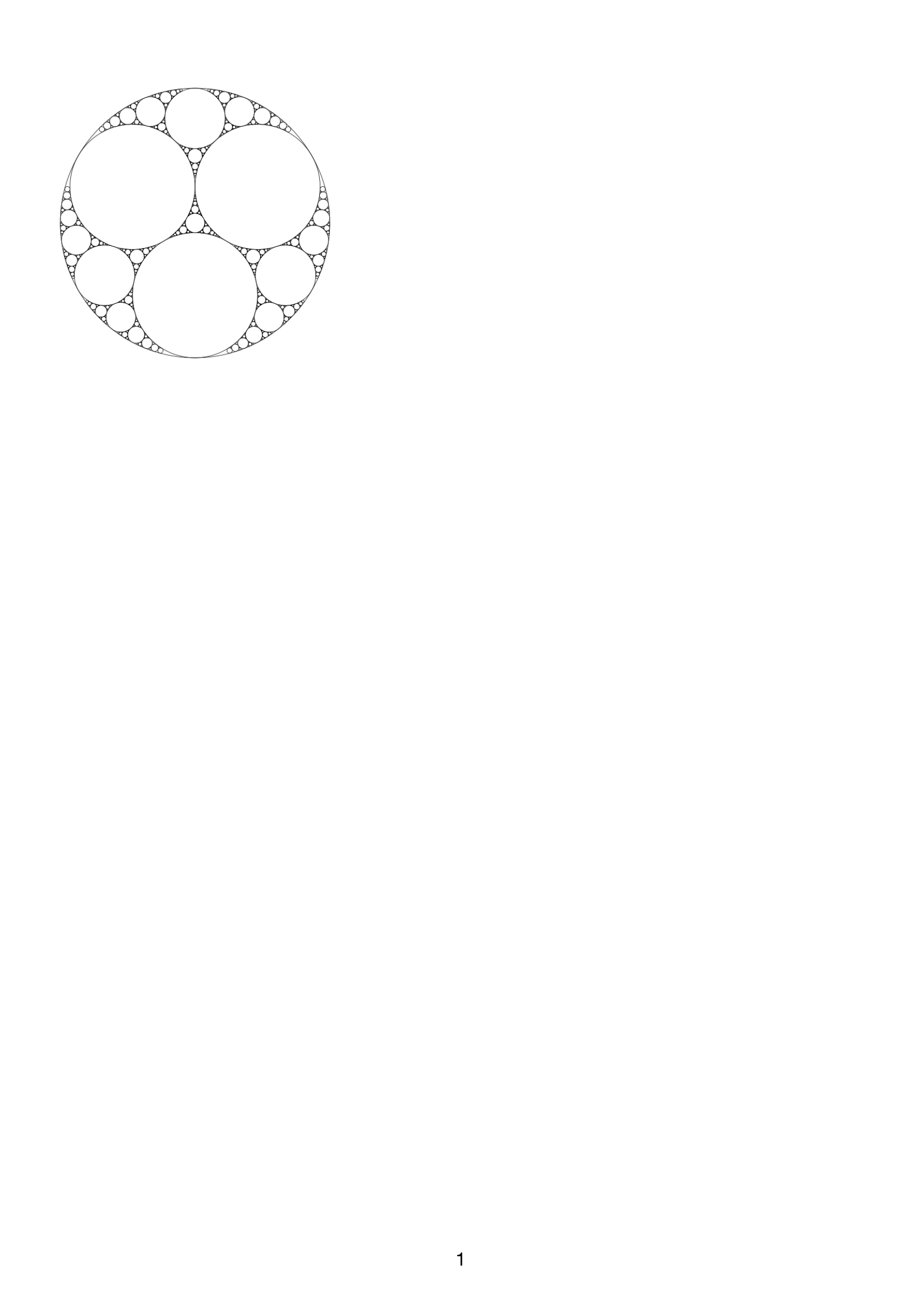}
\vspace{-37\baselineskip}
\caption{Apollonian Gasket}
\label{fig: Baderne11}
\end{figure}

The important point for us, however, is that, in addition to this approach, many other works such as those of Kocik (see \cite{Koc1}, \cite{Koc2}, \cite{Koc3}, \cite{Koc4}, \cite{Koc5}) also use spinors to describe Cartesian circle configurations and Apollonian stackings. In these works, spinors are complex numbers (defined up to the sign) describing the tangents between pairs of circles. We will show later how Descartes's theorem is deduced from these spinor considerations.

But we can already draw a provisional conclusion. Against the growing skepticism regarding knowledge, which is indeed changeable because it is obviously situated in history, Bachelard once insisted that certain truths are "forever" in science. And not content with remaining what they are, that is, fixed once and for all, they give rise to generalizations and extensions. For example, Archimedes' principle, which initially applies to fluids, was later applied to gases. Or again, Einstein's theory of relativity, first stated in a restricted form and applied to Galilean motions (rectilinear and uniform), was later generalized to accelerated motions, and then further extended by Weyl and other physicists (see \cite{Par1}). In other words, there is indeed an "inductive value" of true statements that gradually creates a sanctioned and therefore cumulative history, beyond all paradigm shifts and all imaginable "ruptures."

\section{Descartes and the Spinors}

Let us return, however, to Descartes and the circle theorem. Despite new proofs (including Euler's (see \cite{Eul}) (trigonometric) of formula (4), rediscovered, as we have seen, by Steiner, Beecroft, Coxeter, and Pedoe (who listed other proofs), it was not until the early 2000s that the depth of this theorem and its extension possibilities were truly appreciated.

\subsection{New cases, new writings}
From this perspective, the work of Lagarias and his colleagues (see \cite{Lag} then \cite{Gra}) was decisive, paving the way for the later thinking of Kocik (see \cite{Koc1}) and then Mathews (see \cite{Mat} and \cite{Mat1}) on spinors. Let us briefly review how this generalization was achieved.

First, the introduction of the concept of "curvature" invites us to see that Descartes' work in fact only concerned one of the possible configurations likely to be associated with the circle theorem (see Fig.5).

 \begin{figure}[h] 
\vspace{-1\baselineskip}
\hspace{4\baselineskip}
\includegraphics[width=9in]{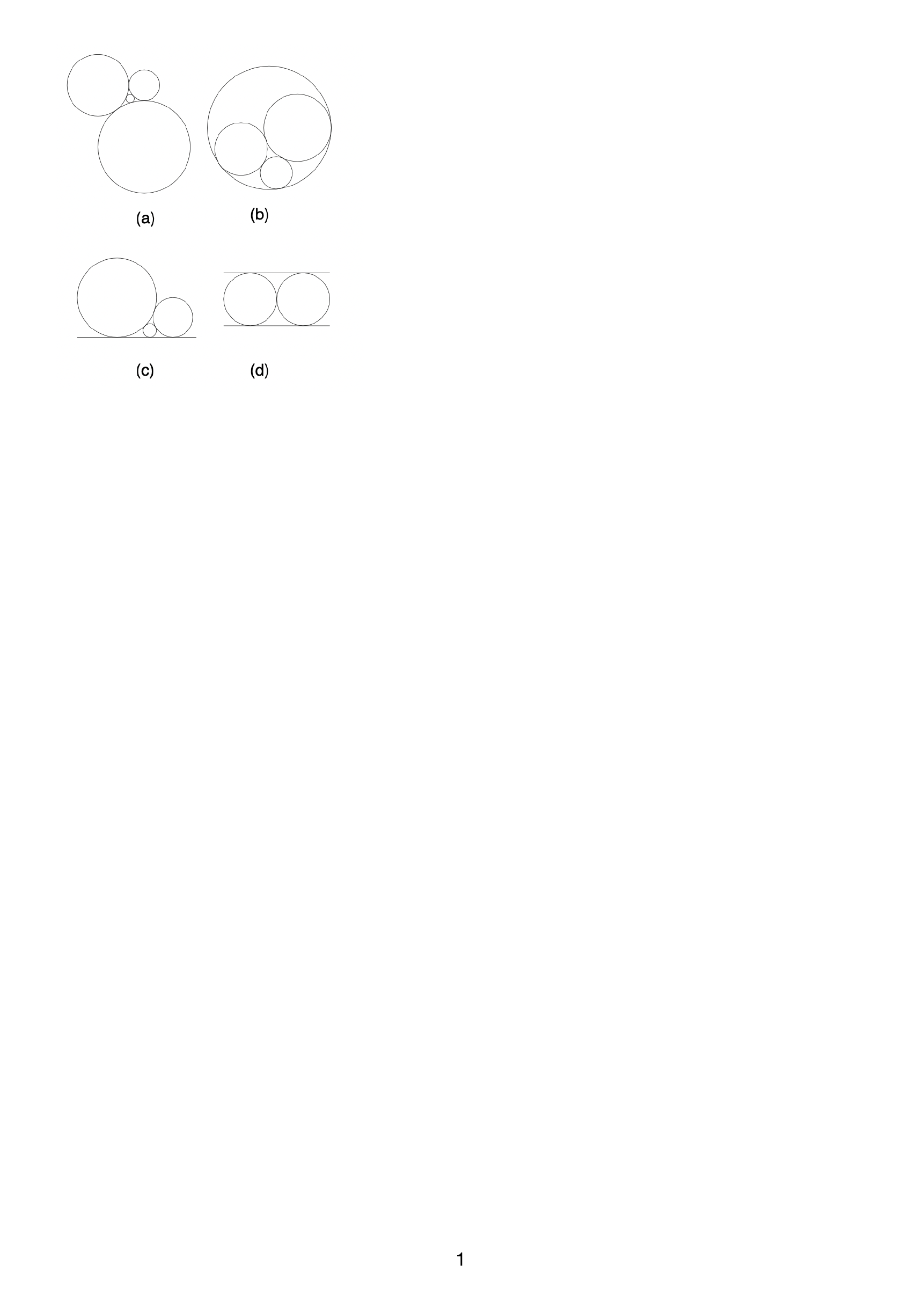}
\vspace{-46\baselineskip}
\caption{The different configurations of tangent circles according to \cite{Mat1}}
\label{fig: circle11}
\end{figure}

Descartes only had case (a) in mind, while another circular solution is (b), cases (c) and (d) relating to degenerate tangent circles reduced to straight lines\footnote{Lagarias (see \cite{Lag}) observes that given three mutually tangent circles, with curvatures $b_1, b_2$ and $b_3$, there are exactly two other circles tangent to each of them, each giving a four-circle Descartes configuration. The curvatures of these two new circles are the roots of the quadratic equation
\[
b_4 + b'_4 = 2(b_1 + b_2 + b_3). \quad (\ast)
\]
Thus, from a Descartes configuration, one can select any of the four circles and replace it with the other circle tangent to the other three; this gives a new Descartes configuration. The new curvature can be obtained from the initial four using ($\ast$). This construction can be repeated indefinitely.}.

But the first step in the true generalization is to rewrite the "Cartesian" formula (4) in the form:
\[
\sum_{j=1}^4 b_j^2= \frac{1}{2}(\sum_{j=1}^4 b_j)^2.
\]
then to extend it, via oriented configurations, to spheres of $n$ dimensions, so that we have, at that point, the relation:
\[
\sum_{j=1}^{n+2} b_j^2= \frac{1}{n}(\sum_{j=1}^{n+2} b_j)^2. \]
Reiterating the Cartesian construction in each of the circles surrounding the central circle leads to a fractal-like Apollonian stack, sometimes called "Apollonian gasket."\footnote{Translated into French as "baderne" by François Apéry (see \cite{Ape}). Starting from the Cartesian configuration, by continuing this construction step by step in this manner, we can add $2 \times 3^n$ new circles at step $n$, giving a total of $3^{n+1}+2$ circles after $n$ steps. In 1943, it was possible to prove that the residual set, complementary to the disks in the large disk, has a zero area (see \cite{Kas}), and therefore that the stacking is complete. In 1973, it was shown to have a Hausdorff dimension equal to approximately 1.3057.}, moreover known to Leibniz (see \cite{Lei1}, \cite{Lei2})\footnote{It is rather curious to note that Michel Serres (see \cite{Ser}, I, 371), in his famous thesis on the Leibniz system and its mathematical models, does not faithfully reproduce the figure drawn by Leibniz, precisely forgetting the central circle tangent to the other three, and which clearly showed that the diagram was closer to Apollonius and Descartes than to Leeuwenhoeck's microscope and the question of the nesting of the germs with which he seeks to bring it closer. On the other hand, B. Mandelbrot (see \cite{Man}, 243) cites Leibniz's letter to the R.F. des Bosses dated March 11, 1716, as one of the first manifestations of fractal thinking.}.

But the generalization continues with the introduction of complexes\footnote{In this case, the relation ($\ast$) previously mentioned in the footnote transforms into the relation:
\[
b_4z_4 + b'_4z_4' = 2(b_1z_1 + b_2z_2 + b_3z_3).
\]
and the general formula including the complex equations takes the form indicated in the text.}, which allows us to posit, for four mutually tangent circles, of curvature $b_j$ and center $z_j = x_j + iy_j$
\[
\sum_{j=1}^{4} (b_jz_j)^2= \frac{1}{2}(\sum_{j=1}^{4} b_jz_j)^2. \]
Lagarias and his colleagues (see \cite{Lag}, 6) were the first to express the Cartesian formula in a matrix form, which Kocik would later reuse. If we write Descartes' quadratic form in the generic form:
\[
b_1^2+b_2^2+b_3^2+b_4^2 = \frac{1}{2}(b_1+b_2+b_3+b_4)^2,
\]
we can make it correspond to the following matrix notation:
\[
\mathbf{Q}_2 : = I_4 - \frac{1}{2} \mathbf{I_4I_4^T}= \frac{1}{2}
\begin{bmatrix}
-1 &  1 & 1 & 1 \\
1 & -1 & 1 & 1 \\
1 & 1 & -1 & 1 \\
1 & 1 & 1 & -1
\end{bmatrix}
\]
where $\mathbf{I}_n$ denotes a column of $n$ 1 and $\mathbf{Q}_2$ is the coefficient matrix of the Cartesian quadratic form\footnote{The matrix 
$\mathbf{Q}_2 : = I_4 - \frac{1}{2} \mathbf{I_4I_4^T}$ has eigenvalues +1, +1, +1, -1. Its signature is (3,1).
 It is the Minkowski form, and Descartes' theorem is precisely the statement that the curvature 4-vector is
{\it null}. So there is a stronger reason for justifying the extension of Descartes' theorem to spinors than the simple observation of the correspondence between a linear-form and a quadratic form. In fact, a null vector in $\mathbb{R}(3, 1)$ is exactly the kind of object that has a
spinor square root, which is why Kocik's tangency spinor exists at all and why the
Apollonian group turns out to be generated by Lorentz reflections.}:
\[
Q_2(b_1, b_2, b_3, b_4):= \mathbf{x}^T \mathbf{Q_2 b} = (b_1^2+ b_2^2 + b_3^2+ b_4^2) - \frac{1}{2} (b_1+ b_2 + b_3+ b_4)^2.
\]
The subscript 2 in $\mathbf{Q}_2$ refers to the dimension of the space considered here. If $\mathbf{b} = (b_1\ b_2\ b_3\ b_4)^T$ denotes the column vector of curvatures and $\mathbf{c} = (b_1z_1, b_2z_2, b_3z_3, b_4z_4)^T$, then Descartes' theorem simply states that:
\[
\mathbf{b}^T\mathbf{Q_2b} = 0,
\]
and Descartes' "complex" theorem that:
\[
\mathbf{c}^T\mathbf{Q_2c} = 0.
\]
We can then easily add circles by acting on the matrix, changing the dimension of the quadratic form to express configurations in an $n$-dimensional Euclidean space, and we will then have:
\[
\mathbf{Q_n(b) := b}^T \mathbf{Q_nb} = 0,
\]
or move from circles to spheres, and even leave Euclidean space for spherical or hyperbolic geometries where we can also study Apollonian stacks, if desired.

Descartes' theorem is therefore generalized today in all possible ways\footnote{Let us note in passing that Clifford algebras would allow a much simpler and more elegant proof of Descartes' theorem than the one we gave above. As we have already said, this theorem indeed links the square of a linear form ($\sum k_i)^2$ to a quadratic form ($\sum k_j^2$). By interpreting the curvatures $k_i$ as the elements of a Clifford algebra, we can encode the relation directly, via anticommutators. Let us only sketch the proof here: let us consider a Clifford algebra $A$ with generators $e_1, e_2, e_3, e_4$ verifying $e_i^2 = 2$ and $e_ie_j +e_je_i = 0$ for $i \neq j$. Let us define $K = k_1e_1+k_2e_2 +k_3e_3+k_4e_4$. Let's calculate $K^2$:
\[
K^2 = (k_1^2 +k_2^2 + k_3^2 + k_4^2) \cdot 2 + 2(k_1k_2e_1e_2 + ...).
\]
If $K$ is isotropic (i.e $K^2 = 0$), then:
\[
\sum k_i^2 = 0 \quad \textnormal{et} \quad \sum k_ik_je_ie_j = 0.
\]
By imposing tangency (relations $k_4 = k_1+k_2+k_3 \pm 2 \sqrt{\sum k_ik_j}$), we find the theme.}.

\subsection{The Transition to Spinors}

As we suggested above, as soon as we have an equivalence between a quadratic form and the square of a linear form, we are in fact in the context that led, in the 19th century, to the emergence of Clifford algebras and, in the 20th century, with Dirac, to the expression of the relativistic Hamiltonian of the wave function, the Pauli matrices associated with particle spin, and finally, in their wake, the notion of "spinor," which is introduced quite naturally. We therefore necessarily find the same formalism here, hidden beneath Descartes' theorem.

This time, it was Kocik, initially working on Pythagorean triples (see \cite{Koc6} and also \cite{Par}), who introduced this particularly illuminating concept, that of the "tangency spinor."

Recall that spinors were introduced by Élie Cartan (1869-1951) in 1913 (see \cite{Car1}) and subsequently named by Paul Ehrenfest (1880-1933) for physical reasons. They have indeed been widely used by quantum mechanics in order to parameterize a new degree of freedom: this one appeared with the terms realizing the equivalence of quadratic forms and squares of linear forms in the Dirac equation, terms associated with matrices expressing a sort of rotation of the particles on themselves, the spin\footnote{Each spinor has $2n$ components, depending on whether $n = 2n + 1$ or $n = 2n$. When $n = 2$, the spinors of 4-dimensional space appear in the famous Dirac electron equations, the 4 wave functions being in fact the components of a spinor. The wave function of a fermion is therefore represented by a spinor. For particles with spin 1/2 (notably the electron), this is expressed by the Dirac equation. For hypothetical particles with spin 3/2, the Rarita-Schwinger equation would apply. Books abound on the concept of spinors, particularly by physicists and mathematician-physicists. These include \cite{Car2}, \cite{Str}, \cite{Pen}, and \cite{Hla}. But the origin of this concept is geometric.}.

Present in Cartan's work as part of a study of linear representations of simple groups, spinors also provide, from a mathematical point of view, a linear representation of the group of rotations of a space with any number of dimensions $n$. Let us briefly introduce them here in the case of two-component spinors:

Let two vectors $\mathbf{X}_1= (x_1, y_1, z_1)$ and $\mathbf{X}_2= (x_2, y_2, z_2)$ be in the Euclidean space $\mathbf{E}_3$, with the same origin $O$, mutually orthogonal, and of equal norm. These vectors define a plane, and if we consider them as ordered, this order defines a direction of rotation, in this case, that corresponding to the indices. Since the vectors are orthogonal and have the same magnitude, we can write:

\begin{align}
&||\mathbf{X_1}||^2 = ||\mathbf{X_2}||^2 = x_1^2+ y_1^2+ z_1^2 = x_2^2+y_2^2+z_2^2 \notag\\
& \mathbf{X_1.X_2} = x_1x_2+ y_1y_2+ z_1z_2 = 0.
\end{align}

To introduce an algebraic representation of the order, we multiply the components of the second vector by the imaginary number $i$, thus forming the three complex numbers:
\[
x = x_1+ix_2, \quad y = y_1+iy_2, \quad z = z_1+iz_2,
\]
thus constituting the components of a vector $\mathbf{Z} = (x, y, z)$ which, due to the choice of the previous vectors $\mathbf{X_1}$ and $\mathbf{X_2}$, are not independent, so we can write:
\[
\mathbf{Z.Z = ||Z||^2} = x^2+y^2+z^2 = 0,
\]
which makes $\mathbf{Z}$, a vector with zero norm, a so-called "isotropic" vector, orthogonal to itself.
We can then observe that the relationship between the three complex numbers $x, y$ and $z$ allows them to be expressed using only two complex numbers. Indeed, this relationship can be expressed in the form:
\[
z^2 = -(x^2+y^2) = -(x+iy)(x-iy),
\]
and if we set:
\[
x +iy = -2 \phi^2, \quad x -iy = 2 \psi^2,
\]
these two numbers $\phi$ and $\psi$ allow us to calculate $x, y$ and $z$, since we have:
\begin{equation}
x = \psi^2-\phi^2, \ y = i(\psi^2-\phi^2), \ z = \pm 2 \psi\phi \ \textnormal{(we generally choose the - sign)}
\end{equation}
The set of two complex numbers $\psi$ and $\phi$ constitutes a representation of the two vectors $\mathbf{X_1}$ and $\mathbf{X_2}$ as well as of the order chosen for them. The pair of numbers ($\psi,\phi)$, linked to the vectors $\mathbf{X_1}$ and $\mathbf{X_2}$ by relations (7), constitutes a {\it spinor}.

We introduced spinors via the 3-dimensional space, but they can exist in any dimension. Moreover, they are linked to the operation equating a quadratic form with a linear form because the coefficients that allow this equivalence, taken from a matrix algebra called the Clifford algebra, are generated by unitary matrices; any rotation in the reference space causes the spinors, via its associated matrix, to undergo a specific unitary transformation. Note that spinors are stable under addition and multiplication by a complex number, so they constitute intrinsic mathematical entities (expressing in particular the fact that a rotation of 2$\pi$ does not return to zero) and form a particular vector space. The same situations being linked to the same formalisms, in the case of Cartesian circles, any ordered pair of disks in the Euclidean plane, which are also capable of undergoing rotations, gives rise to a spinor $u \in \mathbb{R}^2$, an element of the spinor space of dimension 2 (symplectic space), defined up to sign. Algebraically, the process can be described as follows: the disks are represented by unit vectors of space type in the Minkowski space $M = \mathbb{R}^{3,1}$. A certain product of these vectors lies in the null cone of a subspace isomorphic to the Minkowski space $\mathbb{R}^{2,1} \subset M$. Such vectors can in turn be represented by the tensor square of a spinor, an element of the spinor space associated with $\mathbb{R}^{2,1}$.

It is, more specifically, the notion of a "tangency spinor" that will be useful here. A tangency spinor of an ordered pair of mutually tangent disks in the Euclidean plane (conveniently identified with the complex plane $\mathbb{R}2 \cong \mathbb{C}$) is a vector (concretely, a complex number) such that:
\[
u = \pm\sqrt{\frac{z}{r_1r_2}},
\]
where $z = \overrightarrow{O_1O_2}$ is the vector (expressed by the complex number) joining the centers $O_1$ and $O_2$ of the disks of radii $r_1$ and $r_2$, respectively. The tangency spinor is defined up to a sign.

This definition might seem artificial or arbitrary at first glance. However, as Kocik shows, it leads to a number of surprising and fruitful properties. For example, knowing that a quartet of mutually tangent disks is called a "Descartes configuration," Kocik demonstrates that the spinors in a Descartes configuration admit a choice of signs such that the following two properties hold:

\begin{equation}
\textnormal{"rot}\ \mathbf{u}=0\textnormal{"} : u_{12} + u_{23} + u_{31} = 0,
\end{equation}
\begin{equation}
\textnormal{"div}\ \mathbf{u} = 0\textnormal{"} : u_{14} + u_{24} + u_{34} = 0
\end{equation}
where $u_{i j}$ represents a spinor for the $i$-th and $j$-th tangent disks as shown in Fig. 6.

\begin{figure}[h] 
\vspace{-0.5\baselineskip}
\hspace{4\baselineskip}
\includegraphics[width=9in]{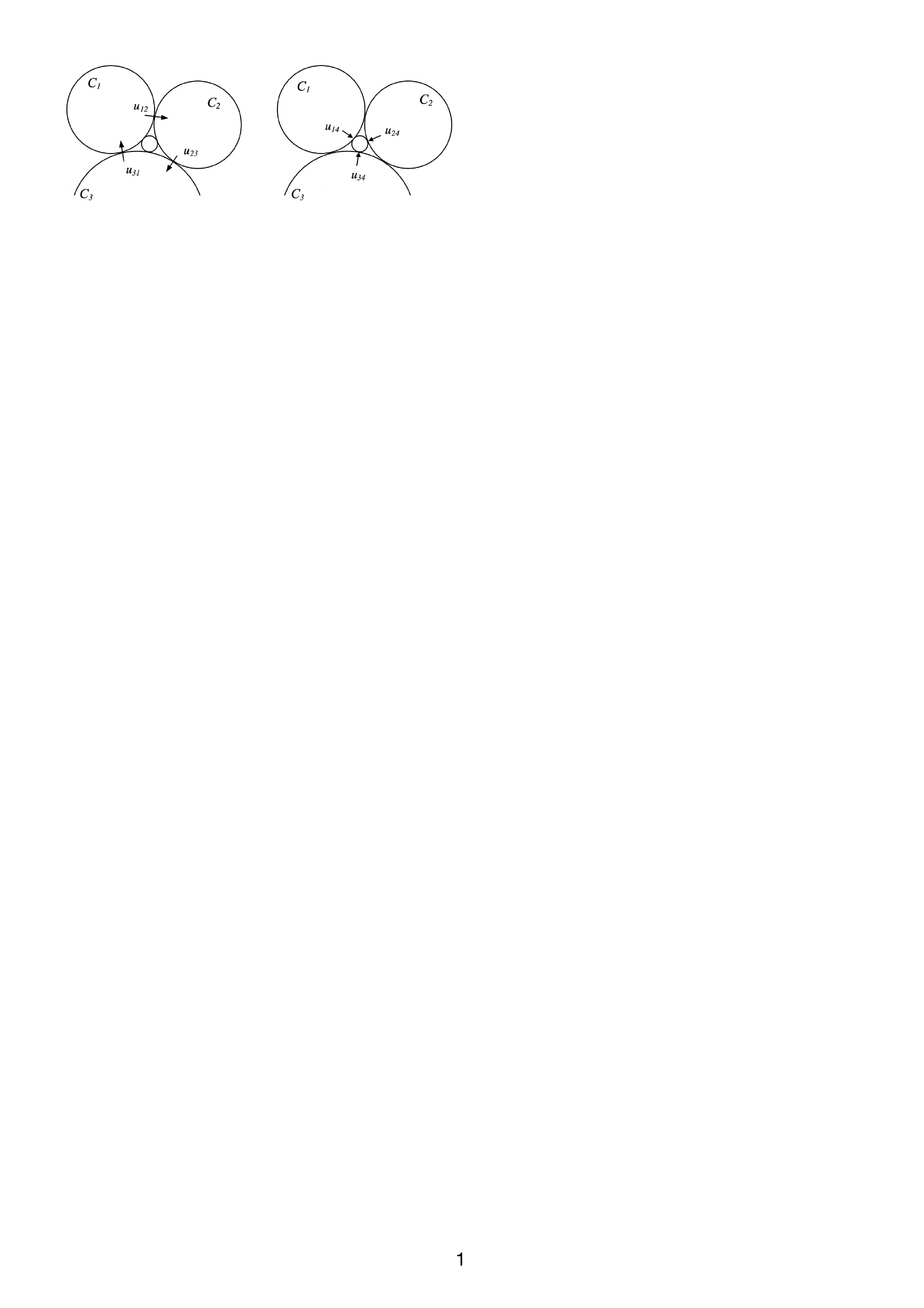}
\vspace{-53\baselineskip}
\caption{Spinor theorems according to \cite{Koc2}}
\label{fig: circle22}
\end{figure}

Kocik points out that the arrow representation of spinors in the figures is purely symbolic and only indicates the order of the disks. However, the last result mentioned above, "div $\mathbf{u} = 0$", can be considered a "spinorial" version of Descartes's theorem on circles, from which this one can also be deduced. Let's briefly demonstrate it here.

\begin{thm}
The relation \textnormal{(10)} implies Descartes' theorem.
\end{thm}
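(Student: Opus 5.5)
The plan is to turn the spinor relation (10) into two statements about complex numbers: one about their moduli and one about their arguments. Combining the two should then leave only an identity among the curvatures $b_j = 1/r_j$. Write $z_j$ for the centre of the $j$-th disk and $u_i := u_{i4}$ for $i=1,2,3$.

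\textbf{Step 1: the basic spinor identities.} By the definition of the tangency spinor,
\[
u_{ij}^2 = b_i b_j\,(z_j - z_i).
\]
For externally tangent disks $|z_j - z_i| = r_i + r_j$, so
\[
|u_{ij}|^2 = b_i b_j (r_i + r_j) = b_i + b_j .
\]
In particular $|u_i|^2 = b_i + b_4$, and we may write $u_i = \sqrt{b_i + b_4}\, e^{i\theta_i}$.

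\textbf{Step 2: what relation (10) gives.} The three vectors $u_1, u_2, u_3$ add up to zero, so they close up into a triangle. Hence, for each pair $\{i,j\}$ with third index $k$,
\[
|u_k|^2 = |u_i + u_j|^2 .
\]
Using Step 1, this reads
\[
b_k + b_4 = (b_i + b_4) + (b_j + b_4) + 2\sqrt{(b_i + b_4)(b_j + b_4)}\,\cos(\theta_i - \theta_j).
\]
This fixes $\cos\Delta$, where $\Delta = \theta_i - \theta_j$, as a rational expression in the $b$'s divided by that square root.

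\textbf{Step 3: the independent geometric relation.} The squares $u_i^2 = b_i b_4 (z_4 - z_i)$ only encode the positions of the centres relative to $z_4$. The triangle of centres $z_i, z_j, z_4$ has sides $r_i + r_4$, $r_j + r_4$, $r_i + r_j$, so the law of cosines fixes $\cos 2\Delta$. Equivalently, writing $z_j - z_i = (z_4 - z_i) - (z_4 - z_j)$ and taking moduli gives
\[
\bigl|\, b_j u_i^2 - b_i u_j^2 \,\bigr| = b_4 (b_i + b_j),
\]
that is,
\[
b_j^2 (b_i + b_4)^2 + b_i^2 (b_j + b_4)^2 - 2 b_i b_j (b_i + b_4)(b_j + b_4)\cos 2\Delta = b_4^2 (b_i + b_j)^2 .
\]

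\textbf{Step 4: eliminate the angle.} Substitute $\cos 2\Delta = 2\cos^2\Delta - 1$, with $\cos\Delta$ taken from Step 2. The square root disappears because only $\cos^2\Delta$ occurs. Clear the denominator $(b_i + b_4)(b_j + b_4)$ and collect terms. The expected outcome is that this factors as a nonzero factor, presumably a product of sums of curvatures, times
\[
(b_1 + b_2 + b_3 + b_4)^2 - 2\,(b_1^2 + b_2^2 + b_3^2 + b_4^2).
\]
Cancelling the nonzero factor then gives exactly the form $\sum b_j^2 = \frac{1}{2}\left(\sum b_j\right)^2$ quoted above, which is equivalent to equation (4) with $b = 1/d, 1/e, 1/f, 1/x$.

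\textbf{Expected obstacle.} The hard part is this last elimination: checking that the extraneous factor really splits off and is nonzero. One pair $\{i,j\}$ should already suffice. To keep the algebra tractable, I would first set $p_i = b_i + b_4$, which makes the Step 2 relation linear in $\sqrt{p_i p_j}\cos\Delta$. If the factorisation proves awkward, a fallback is to exploit the full symmetry: sum the three Step 2 relations over the pairs, which expresses $\sum_{i<j} u_i \bar u_j$ in terms of the $b$'s. Then use $\left(\sum_i u_i\right)^2 = 0$, that is $\sum_i u_i^2 = -2\sum_{i<j} u_i u_j$, together with $\sum_i b_i b_4 z_i$ to reach the symmetric form directly.
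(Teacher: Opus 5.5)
Your argument is correct and has the same skeleton as the paper's proof. Take $a=u_{14}$, $b=u_{24}$ (so $i,j,k=1,2,3$), and the paper's $A,B,C,D$ equal to $b_1,b_2,b_4,b_3$. Then your Steps 1--2 are exactly its computation $2a\cdot b=\|d\|^2-\|a\|^2-\|b\|^2=D-A-B-C$.

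The real difference is in the second ingredient. The paper finishes with Lagrange's identity $(a\cdot b)^2=|a|^2|b|^2-(a\times b)^2$. It then uses $(a\times b)^2=C^2$ without stating or justifying it; this is Kocik's fact that the cross product of two spinors sharing a disk is that disk's curvature, left implicit in the figure. You instead get the missing angular information from the tangency of disks $i$ and $j$, and in doing so you reprove that fact. Since $b_jp_i-b_ip_j=b_4(b_j-b_i)$, your Step 3 relation rearranges to $2b_ib_jp_ip_j(1-\cos 2\Delta)=b_4^2\left[(b_i+b_j)^2-(b_i-b_j)^2\right]$, i.e.\ $p_ip_j\sin^2\Delta=b_4^2$. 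That is precisely the paper's $(a\times b)^2=C^2$. So your route is self-contained where the paper's relies on an imported lemma, at the cost of a short extra computation.

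That computation does close, but your guess about the extraneous factor is wrong. Write $S=b_k-b_i-b_j-b_4$ and substitute $\cos 2\Delta=S^2/(2p_ip_j)-1$. The $p_ip_j$ cancels immediately, and using $b_jp_i+b_ip_j=2b_ib_j+b_4(b_i+b_j)$ you obtain
\[
b_ib_j\left[S^2-4(b_ib_j+b_ib_4+b_jb_4)\right]=0 .
\]
So the factor is $b_ib_j$, not a product of sums; it is nonzero because the curvatures are positive. The bracket equals $2\sum_m b_m^2-\left(\sum_m b_m\right)^2$. This is the paper's final identity $4(AB+BC+CA)=(D-A-B-C)^2$; the square on the left in the paper's display is a misprint. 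The fallback you sketch is not needed.
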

\begin{proof}
We start with the well-known relation (see, for example, \cite{Deh}):
\[
\mathbf{||{u•v}||^2 + ||u \times v||^2 = ||u||^2||v||^2},
\]
which is in fact, in disguised form, the Pythagorean theorem $cos^2 \theta + sin^2 \theta = 1$, known in three-dimensional vector calculus and valid in dimension 2. Consider the configuration and notation of Fig. 7.

\begin{figure}[h] 
\vspace{-1\baselineskip} 
\hspace{2\baselineskip} 
\includegraphics[width=9in]{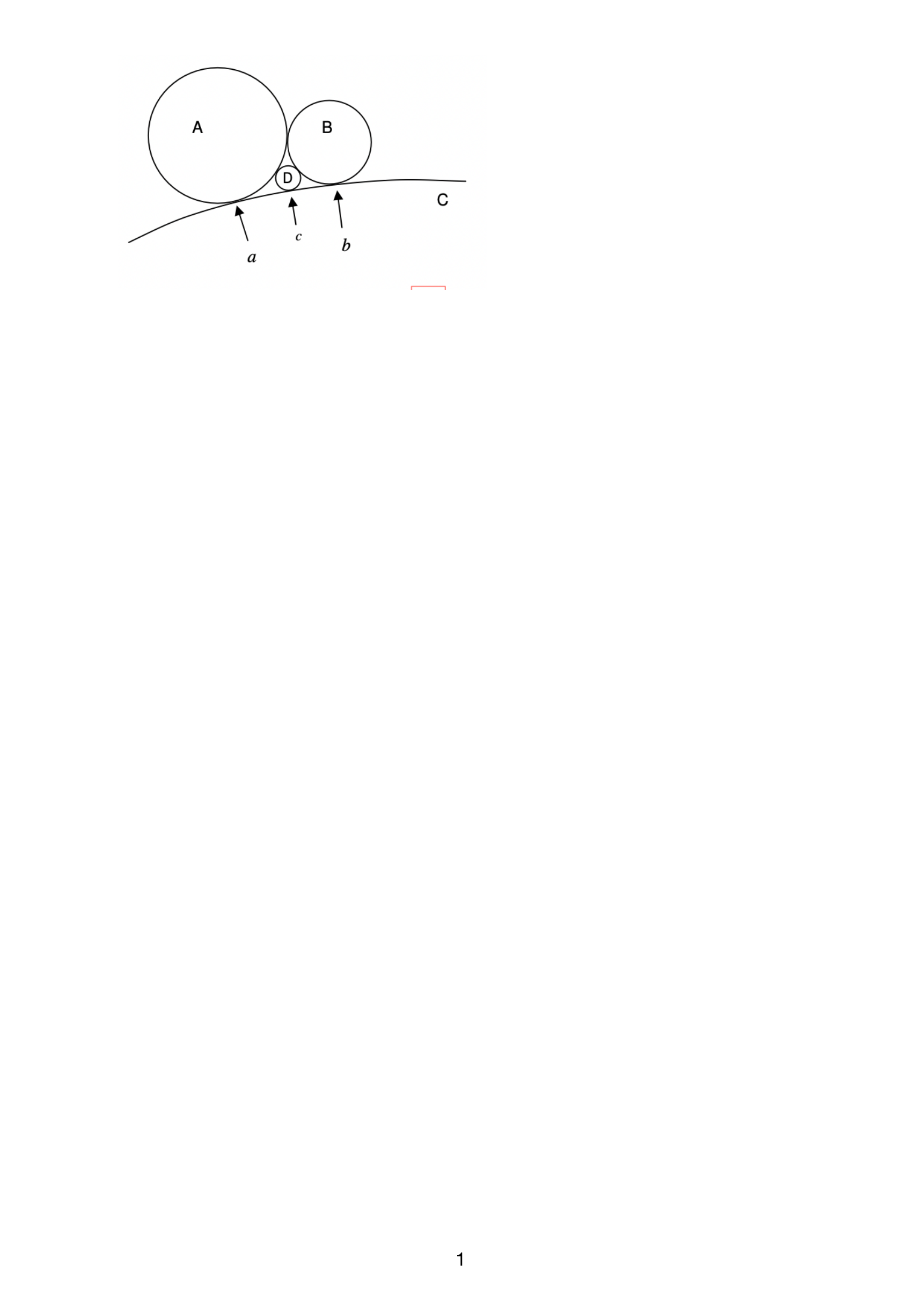} 
\vspace{-51\baselineskip} 
\caption{Proof of proposition 15 according to \cite{Koc2}} 
\label{fig: circle33} 
\end{figure}

We have:
\[
a+b=d \Rightarrow ||d||^2 =||a||^2 +2a \cdot b+|b|^2
\]
This gives the meaning of the scalar product:
\begin{align}
2a \cdot b & = ||d||^2 - ||a||^2 -||b||^2 \notag \\
& = C+D-(A+C)-B+C) \notag \\
&= D-A-B-C.
\end{align}
On the other hand, we can calculate the scalar product using:
\[
|a \cdot b|^2 = |a|^2|b|^2 - (a\times b)^2 = (A+C)(B+C)-C^2
= AB+BC+CA. \]
By equating the scalar product of two equations, we obtain:
\[
4(AB+BC+CA)^2 =(D-A-B-C)^2,
\]
which, after expansion and grouping of terms, is none other than Descartes' formula.
\end{proof}

In the case of Apollonian disk stacks, spinors are defined at each point of tangency of two disks, but we will not pursue this technical issue further and will stop our commentary on Kocik here.\footnote{The initial motivation for tangency spinors, from his perspective, was the discovery that the Apollonian Gasket (and other integral Apollonian disk stacks) contain Pythagorean triples. Spinors were thus introduced as the geometric representation of their Euclidean parameterizations. The name "spinor" is justified by the fact that these can be considered as spinors of Minkowski space $\mathbb{R}^{2+1}$, in a manner quite analogous to physical spacetime $\mathbb{R}^{3+1}$.}.

\subsection{Mathews's Approach}

To the knowledge of Mathews et al., none of the previously cited works provides a generalization of Descartes' circle theorem to what appear, in his vocabulary, as "$n$-flowers" for $n > 3$, although there are results involving flower-shaped configurations, such as the Soddy hexlet (see \cite{Sod1}). In fact, according to the authors, these works do not provide an explicit equation relating the curvatures.

Since "flowers" are simple examples of stacks of circles, general stacking theory applies to them (see \cite{Ste}). In general, as Mathews and al. explain, "from a simplicial 2-complex $K$ triangulating an oriented surface, the theory of circle stacking studies the existence and uniqueness of circle stacks realizing $K$, in the sense that the vertices of $K$ correspond to circles, the edges to tangencies and the triangles to triplets of oriented tangent circles. Flowers appear in the simple case where $K$ is a disk constructed from $n$ triangles around a central vertex" (see \cite{Mat}, 3). When $K$ is topologically a disk, Stephenson's "boundary value theorem" (see \cite{Ste}, thm. 11.6) applies, and the curvatures of the circles at the boundary vertices and the branching structure can be specified arbitrarily. There then exists a unique stacking (in Euclidean or hyperbolic geometry). The work of Mathews et al. then gives the Euclidean curvature of the inner circle in the case of the flower, as a root of an algebraic equation.

Starting from Descartes' classical circle theorem, which, in its modern interpretation, connects the curvatures of four mutually externally tangent circles, three "petal" circles around a central circle, forming a three-flower configuration, Mathews and Zymaris therefore attempt to generalize this theorem to the case of an "$n$-flower," consisting of $n$ tangent circles around a central circle, in order to give an explicit equation satisfied by their curvatures.

Without going into the details of such a construction (which assumes a spinor description of horospheres in hyperbolic geometry), let us nevertheless give a brief overview, if only to show how inspiring Descartes' theorem remains for contemporary mathematicians.

Let $n \ge 3$. An $n$-flower consists of a central circle $C_\infty$ and $n$ petal circles $C_j$, over integers $j$ mod $n$, such that the $j$s are externally tangent to $C_\infty$ in order around $C_\infty$, and each $C_j$ is externally tangent to $C_{j-1}$ and $C_{j+1}$.

In this article, the authors denote the curvature of a circle $C_\bullet$ by $\kappa_\bullet$. Descartes' circle theorem gives an equation satisfied by the curvatures of a three-dimensional flower, and in this case we have:
\begin{equation}
(\kappa_\infty + \kappa_1 + \kappa_2 + \kappa_3)^2 = 2(\kappa_\infty^2 + \kappa_1^2 + \kappa_2^3 + \kappa_3^2).
\end{equation}
This therefore requires presenting an explicit equation satisfied by the curvatures of an $n$-dimensional flower. This generalization of Descartes' theorem then takes the following form:

Let $C_\infty$ and $C_j (j \in \mathbb{Z}/n\mathbb{Z})$ be the circles of an $n$-flower having curvatures $\kappa_\infty, \kappa_j$ respectively. The authors define $m_0$ and $m_j$ for $1 \leq j \leq n - 1$ as:
\begin{equation}
m_0 = \sqrt\frac{K_0}{k_\infty}+1, m_j = \sqrt(\frac{K_j}{k_\infty}+1)(\frac{K_{j-1}}{k_\infty}+1))-1. \end{equation}

So, for $n$ odd, we have:
\begin{equation}
\frac{m_0^2}{2}(\prod_{j=1}^{n-1} (m_j-i) - \prod_{j=1}^{n-1} (m_j+i))-\prod_{j=1}^{\frac{n-1}{2}}(m^2_{2j}+1 =0.
\end{equation}
And for $n$ even, the following equation holds:
\begin{equation}
\frac{i}{2}(\prod_{j=1}^{n-1} (m_j-i) - \prod_{j=1}^{n-1} (m_j+i))-\prod_{j=1}^{\frac{n-1}{2}}(m^2_{2j-1}+1 =0.
\end{equation}

In other words, from the curvatures $\kappa_\bullet$, the authors define auxiliary variables $m_\bullet$ via (13), and the $m_\bullet$ satisfy polynomial equations. Since every $\kappa_\bullet > 0$, each $m_\bullet$ is the square root of a clearly positive number, and they take $m_\bullet$ as the positive square root.
In (14) and (15), $i$ is the usual square root of $-1$. Although complex numbers are essential to the proof, for writing these equations, they are mostly a convenience. After expansion and cancellation of terms, the resulting polynomials have integer coefficients. Indeed, by writing [$n$] for $\{1,2,...,n\}$, then for any subset $K \subseteq [n - 1]$, the term of the product $\prod^{n-1}_{j=1}(m_j\pm i)$ including precisely the $m_k$ with $k \in K$ is given by $(\pm i)^{n-1-|K}, \prod_{k \in K} m_k$. These terms come in pairs, one from $\prod_{j=1}^{n-1} (m_j-i)$ and the other from $\prod_{j=1}^{n-1} (m_j+i)$. When $n-1-|K|$ is even, the terms of each pair are real, equal and cancel each other out; when $n-1-|K|$ is
odd, that is, equal to $2l+1$, then the terms of each pair are imaginary and conjugate. Thus, since $\frac{i}{2}((-i)^{n-1-|K|}-i^{n-1-|K|}) = (-1)^l$ (14) and (15) can be written in the form;

\begin{equation}
m_0^2 = \sum\limits_{\underset{K \subseteq [n-1] |K| = n-2l-2}{K \subseteq [n-1]}} (-1)^l \prod_{k \in K} m_k = \prod_{j =1}^{\frac{n-1}{2}} (m_{2-1j}^2 +1)
\end{equation}
And:
\begin{equation}
 \sum\limits_{\underset{K \subseteq [n-1] |K| = n-2l-2}{K \subseteq [n-1]}} (-1)^l \prod_{k \in K} m_k = \prod_{j =1}^{\frac{n-1}{2}} (m_{2j}^2 +1)
\end{equation}
respectively.

In any case, making the substitutions of (13) into (14) or (15) (or (16) or (17)) gives an equation satisfied by the $\kappa_\bullet$,
providing a generalization of Descartes' equation (15). This equation involves square roots, but by multiplying by various conjugates (replacing various $m_\bullet$ with $-m_\bullet$), we can obtain a polynomial relationship between the $m^2_\bullet$; after substitution and erasure of the denominators, we obtain a polynomial relationship between the $\kappa_\bullet$.
In this way, from equation (15), with $n = 3$, we can find the Cartesian circle theorem. Similarly, from (15), with $n = 4$, we obtain the following equation for the curvatures in a 4-flower:

\begin{align}
16\kappa_\infty^4 - 8\kappa_\infty^2 (\kappa_1\kappa_2 &+ \kappa_2\kappa_3 + \kappa_3\kappa_4 + \kappa_4\kappa_1 + 2 \kappa_1\kappa_3 + 2\kappa_2\kappa_4) + (\kappa_1^2 + \kappa_3^2)(\kappa_2^2+ \kappa_4^2) \notag\\
& - 16 \kappa_\infty(\kappa_1\kappa_2 \kappa_3 + \kappa_2\kappa_3\kappa_4 + \kappa_3\kappa_4\kappa_1 + \kappa_4\kappa_1\kappa_2) - 12\kappa_1\kappa_2\kappa_3\kappa_4 \notag \\
& - 2(\kappa_1\kappa_2+ \kappa_3\kappa_4)(\kappa_2\kappa_3+ \kappa_4\kappa_1) = 0.
\end{align}

The larger $n$ is, obviously, the faster the polynomials grow in size and degree.

"Flowers" are thus a building block of circular packing theory. The general theory of circular packing shows that once the curvature of the petals of a flower of type $n$ is known, the curvature of the central circle is determined. Descartes' generalized theorem provides an equation for determining this central curvature.

Based on the work of Mathews (see \cite{Mat1}, Penrose-Rindler (see \cite{Pen}) and Penner (see \cite{Penn}), the demonstration of this result is not obvious and requires some rather subtle reasoning. But its outcome is an incredible generalization of Descartes' theorem, in other words an extension of the simple consideration of 3-flowers to that of $n$-flowers ($n$ tangent circles around the central circle) with an explicit equation satisfied by the curvatures. A final bouquet.

\section{Conclusion}

It may seem futile to the philosopher to be interested in a theorem of Descartes that does not seem to present much interest, {\it a priori}, for his philosophy, nor for philosophy in general. No doubt the philosopher exiled in Egmond aan den Hoef  (a village near the sea, in the Dutch province of North Holland) was initially interested in it as a mathematician and submitted it to Princess Elizabeth as an exercise. But does one do this kind of thing absolutely gratuitously when one is a philosopher? And does one reflect on contact without ulterior motive when one admires (if not a little in love with) a person of a completely different background than one's own? How can one find the right distance to be and remain in contact without transgressing the social boundaries of decorum? Throughout this correspondence, the obsequious Descartes lost himself in idle circumlocutions ("the very humble and very obedient servant"...) while his correspondent allowed herself to sign "your very affectionate friend at your service" and, speaking of a man who had spent most of his life pursuing designs of love and ambition, admitted to never having "stronger and more constant" ones with the philosopher than that of this affectionate friendship. Mathematics is generally more direct, and in the eyes of a psychoanalyst, "kissing circles" could well pass, not as always overly hyperbolic metaphors, but, to borrow Viète's phrase, a way of "finely touching" Elizabeth. 

But let us put aside these speculations, which are unconfirmed. More seriously, the philosophical question at the center of the relationship between Descartes and Elizabeth, as we know, is that of the union of soul and body. How can there be a union of soul and body if both are substances between which there can only be a difference, by definition, "substantial," that is, real and absolute? But of course, there is a third, the soul-body compound, and, moreover, a point of contact between soul and body, which is, moreover, a little more than a point, a sort of small roundness: the pineal gland. The soul, the body, and the union of the two are therefore like circles manifesting points of tangency between them and also with this famous central pineal gland. Perhaps it was to bring this knowledge of contacts into focus that the philosopher submitted this Apollonian problem to the Princess Palatine? It seems to me less false to consider this than to assume, as has been done, that the tree of philosophy is not a tree, but rather bends back on itself to the point of transplanting the branch of morality into the roots, a bushy or rhizomatic solution that no longer has much to do with Descartes (see \cite{Vaq}). On the other hand, any tree in the mathematical sense can also be considered isomorphic to a series of circles (or ellipses) (see \cite{Par2}): a central circle for the trunk (physics and its metaphysical roots), and osculating circles to represent the different branches (in this case, medicine, mechanics, and morality). This is also a possible interpretation of this Cartesian reflection on the circle theorem.

But beyond Descartes and the history of philosophy, as has been said, the circle theorem has an undeniable epistemological interest. Through his successive generalizations, he testifies to the inescapable fact that certain scientific truths are not only inscribed forever in history, but that they are capable of multiple extensions, and that we must therefore grant them an inductive value: not only are they what they are (so to speak "{\it auto kath hauto}", like Platonic ideas), but they are capable of producing offspring, of generating even vaster and more encompassing truths, which do not, however, contravene the core that gave rise to them, but, on the contrary, confirm it.

\end{document}